\title
{A note on Polya urns: the winner may lead all the time }
\date{April 1, 2016; revised 4 April, 2016.\\
This note was posted on my web site in 2016; since there has been
interest to refer to it, it is now also posted on arXiv in 2025.
}
\author{Svante Janson}
\thanks{Partly supported by the Knut and Alice Wallenberg Foundation}
\address{Department of Mathematics, Uppsala University, PO Box 480,
SE-751~06 Uppsala, Sweden}
\email{svante.janson@math.uu.se}
\urladdr{http://www.math.uu.se/svante-janson}
\subjclass[2010]{} 
\renewcommand\le{\leqslant}
\renewcommand\ge{\geqslant}
\theoremstyle{plain}
\newtheorem{theorem}{Theorem}
\theoremstyle{definition}
\newtheorem{remark}[theorem]{Remark}
\theoremstyle{remark}
\newenvironment{romenumerate}[1][-10pt]{
\addtolength{\leftmargini}{#1}\begin{enumerate}
 }{\end{enumerate}}
\newcounter{oldenumi}
{\setcounter{oldenumi}{\value{enumi}}
\begin{romenumerate} \setcounter{enumi}{\value{oldenumi}}}
{\end{romenumerate}}
\newcounter{thmenumerate}
\newcounter{xenumerate}   
\newcommand\marginal[1]{\marginpar[\raggedleft\tiny #1]{\raggedright\tiny#1}}
\newcommand\REM[1]{{\raggedright\texttt{[#1]}\par\marginal{XXX}}}
\xdef\klockan{\the\count1.0\the\count255}
\xdef\klockan{\the\count1.\the\count255}\fi
\newcommand\set[1]{\ensuremath{\{#1\}}}
\newcommand\bigpar[1]{\bigl(#1\bigr)}
\def\rompar(#1){\textup(#1\textup)}    
\newcommand\xfrac[2]{#1/#2}
\def\xexp(#1){e^{#1}}
\newcommand\ntoo{\ensuremath{{n\to\infty}}}
\newcommand\ttoo{\ensuremath{{t\to\infty}}}
\newcommand\punkt{.\spacefactor=1000}    
\newcommand\ie{i.e\punkt}
\newcommand\eg{e.g\punkt}
\newcommand{\as}{a.s\punkt}
\newcounter{CC}
\newcounter{cc}
\newcommand\PP{\operatorname{\mathbb P{}}}
\renewcommand\phi{\xxx}  
\newcommand\xb{\mathsf b}
\newcommand\xw{\mathsf w}
\newcommand\xB{\overline B}
\newcommand\xW{\overline W}
\newcommand{\Polya}{P\'olya}
\begin{document}

\maketitle

We consider a \Polya{} urn where, as in \Polya's original urn
\cite{EggPol,Polya1931},
a ball always is replaced together with additional balls of the same colour
only; however, we allow  the urn to be unfair in the sense that the number
of additional balls may depend on the colour. 
To be precise, the urn contains balls of $q$ colours; we draw a ball
(uniformly at random) and if the drawn ball has colour $i$, we replace it
together with $m_i$ additional balls of colour $i$, for some given (fixed) 
integers $m_i>0$. 
We assume that the urn start with some set of balls, containing at
least one ball of each colour.

It is well-known, and easy to see
by the argument below, that if there is a colour $i$ such that $m_i>m_j$ for
all colours $j\neq i$, then the proportion of balls of colour $i$ tends to 1
a.s.
Of course, even if colour 1, say, wins eventually, it is possible that
colour 2 is lucky initially, and that at some time $n$, most balls are
of colour 2.
The purpose of this note is to show that this happens with probability
strictly less than 1, provided it does not occur already in the initial
position. 
Moreover, this extends to the case when some or all of the numbers $m_i$ are
equal, say with $m_1=m_2\ge m_3\ge\dots$; in 
this case, the proportion of balls of colour 1 converges to some random limit
in $(0,1)$, and the probability that colour 1 eventually dominates lies
strictly between 0 and 1; we show that there is also in this case a positive
probability that colour 1 dominates at all times, if it does so initially.

To be precise, we have  the following
theorem, where we for simplicity consider two colours only.
(The proof is simple and this has presumably been observed before, but we do
not know a reference and therefore give a complete proof.)

\begin{theorem}
Consider a \Polya{} urn with balls of two colours, black and white, 
with the replacement rule that if a ball of colour $i\in\set{\xb,\xw}$ is
drawn, it is replaced together with $m_i$ 
additional balls of the same colour, for some given positive integers
$m_\xb,m_\xw$. 
Let $B_n$ and $W_n$ be the numbers of black and white balls after $n$ draws,
with initial conditions $B_0=b_0$ and $W_0=w_0$ for some given $b_0,w_0$.
Suppose that $m_\xb\ge m_\xw$ and that $b_0>w_0\ge0$.
Then there is a positive probability that $B_n>W_n$ for all $n\ge0$.
\end{theorem}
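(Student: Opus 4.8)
The plan is to monitor the \emph{white fraction} $q_n:=W_n/(B_n+W_n)$ and to exploit the fact that the event $\set{B_n>W_n\text{ for all }n\ge0}$ coincides with $\set{q_n<\tfrac12\text{ for all }n}$, while the hypothesis $b_0>w_0$ gives $q_0=w_0/(b_0+w_0)<\tfrac12$. If I can show that $q_n$ is a nonnegative supermartingale, then a maximal inequality should confine $q_n$ below $\tfrac12$ for all time with positive probability, which is exactly what is wanted.

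Before doing so it is worth noting why a cruder approach fails, since this motivates the whole strategy. One might hope to condition on the event that \emph{every} draw is black; along that trajectory $B_n=b_0+nm_\xb$ and $W_n=w_0$, so its probability is $\prod_{n\ge0}(b_0+nm_\xb)/(b_0+nm_\xb+w_0)$, which vanishes because $\sum_n w_0/(b_0+nm_\xb)=\infty$. Hence one must allow white draws and control them only in aggregate. The guess that $q_n$ is the right quantity is suggested by the fair case $m_\xb=m_\xw$, where it is classical that $B_n/(B_n+W_n)$, and hence $q_n$, is a martingale; one expects $q_n$ to acquire a downward drift once $m_\xb\ge m_\xw$. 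Recognizing this functional is, I think, the only genuine idea in the argument.

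The crux is then the one-step computation. Writing $T_n=B_n+W_n$ and conditioning on $\cF_n$, a black draw (probability $B_n/T_n$) sends $q_n$ to $W_n/(T_n+m_\xb)$, while a white draw (probability $W_n/T_n$) sends it to $(W_n+m_\xw)/(T_n+m_\xw)$. I expect the clean simplification to come from $(W_n+m_\xw)/(T_n+m_\xw)=1-B_n/(T_n+m_\xw)$, after which
\[
\E[q_{n+1}\mid\cF_n]
=\frac{W_n}{T_n}\Bigsqpar{1+B_n\Bigpar{\frac1{T_n+m_\xb}-\frac1{T_n+m_\xw}}}
=q_n+\frac{B_nW_n\,(m_\xw-m_\xb)}{T_n(T_n+m_\xb)(T_n+m_\xw)} .
\]
Since $m_\xb\ge m_\xw$, the correction term is $\le0$, so $q_n$ is a nonnegative supermartingale; this is precisely where the hypothesis $m_\xb\ge m_\xw$ is used, and establishing this (routine) inequality is the main obstacle.

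Finally I would apply Doob's maximal inequality for the nonnegative supermartingale $q_n$, namely $\PP\bigpar{\sup_{n\ge0}q_n\ge\tfrac12}\le \E[q_0]\big/\tfrac12=2w_0/(b_0+w_0)$. Because $b_0>w_0$, this bound is strictly less than $1$, whence $\PP\bigpar{\sup_{n}q_n<\tfrac12}\ge (b_0-w_0)/(b_0+w_0)>0$. On that event $q_n<\tfrac12$, i.e.\ $B_n>W_n$, for every $n$, so the theorem follows, with the explicit lower bound $(b_0-w_0)/(b_0+w_0)$ on the probability.
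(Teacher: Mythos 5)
Your proof is correct, and it takes a genuinely different route from the paper's. The crux checks out: with $T_n=B_n+W_n$,
\begin{equation*}
\E\bigsqpar{q_{n+1}\mid\cF_n}
=\frac{B_n}{T_n}\cdot\frac{W_n}{T_n+m_\xb}
+\frac{W_n}{T_n}\cdot\frac{W_n+m_\xw}{T_n+m_\xw}
=q_n+\frac{B_nW_n(m_\xw-m_\xb)}{T_n(T_n+m_\xb)(T_n+m_\xw)}
\le q_n ,
\end{equation*}
so $q_n$ is a nonnegative supermartingale (a martingale exactly when $m_\xb=m_\xw$, recovering the classical fact), and the maximal inequality for nonnegative supermartingales (Ville/Doob) gives
$\PP\bigpar{q_n\ge\tfrac12 \text{ for some } n}\le 2\E q_0=2w_0/(b_0+w_0)<1$,
hence $\PP\bigpar{B_n>W_n \text{ for all } n}\ge (b_0-w_0)/(b_0+w_0)>0$; the degenerate case $w_0=0$ is covered trivially.

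The paper argues quite differently: it embeds the urn into a continuous-time Markov branching process (Athreya--Ney), uses the a.s.\ convergence $e^{-m_it}\cdot(\text{count})\to Y_i>0$, with each $Y_i$ having full support on $(0,\infty)$, to conclude $\PP\bigpar{\limsup_n W_n/B_n<1}>0$; it then still needs a separate combinatorial step --- selecting a reachable state $(b_N,w_N)$ from which domination persists with positive conditional probability, and steering the first $N$ draws (all black, then all white) so that $B_n>W_n$ throughout --- before closing with the Markov property. Your argument buys three things: it is self-contained (no branching-process machinery), it dispenses entirely with the path-surgery/Markov step, and it is quantitative, yielding the explicit bound $(b_0-w_0)/(b_0+w_0)$. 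What the paper's approach buys in exchange is structural information --- the a.s.\ convergence of $W_n/B_n$ to an explicit limit ($0$ when $m_\xb>m_\xw$, a full-support random variable when $m_\xb=m_\xw$) --- and flexibility for the multicolour variants in its closing remark: your method extends painlessly to the ``majority'' version (the fraction of non-colour-1 balls is again a supermartingale, since $x\mapsto (S+x)/(T+x)$ is increasing in $x$), but the ``plurality'' version via a union bound over the pairwise fractions $W^{(j)}_n/(B_n+W^{(j)}_n)$ requires an initial condition substantially stronger than bare plurality, whereas the paper's argument handles it with no extra cost.
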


\begin{proof}
  We use the standard method of embedding the urn process into a continuous
  time Markov branching process, see \eg{} \cite[Section V.9]{AN}; we give
  each ball an exponential clock, that rings after a random time with the
  distribution Exp(1). When the clock rings at a ball of colour $i$, 
the ball is replaced by $m_i+1$ new  balls of the same colour $i$, each with
a new clock. (All clocks are independent.)
Then the original urn process is  the same (\ie, has the same distribution)
as this continuous time process observed at the times some clock rings.
More formally, 
let $\xB_t$ and $\xW_t$ be the number of black and white balls at time $t\ge0$,
and let $\tau_n$ be the time the $n$-th clock rings. Then, the process
$(\xB_{\tau_n},\xW_{\tau_n})_{n\ge0}$ has the same distribution as the \Polya{}
  urn process, and we may assume $B_n=\xB_{\tau_n}$, $W_n=\xW_{\tau_n}$.

In the continuous time process, the black and white balls act independently,
so $\xB_t$ and $\xW_t$ are two
independent continuous-time branching processes.
Hence, see \cite[Theorems III.7.1 and III.7.2]{AN}, 
there exist independent positive
random variables $Y_\xb,Y_\xw$ such that a.s.
\begin{align}
  e^{-m_\xb t}\xB_t &\to Y_\xb>0,
\\
  e^{-m_\xw t}\xW_t &\to Y_\xw>0.
\end{align}
Consequently,  as \ttoo, \as,
\begin{equation}
    e^{(m_\xb-m_\xw) t}\frac{\xW_t}{\xB_t} \to \frac{Y_\xw}{Y_\xb}<\infty,
\end{equation}
and thus
\begin{equation}\label{win}
   \frac{\xW_t}{\xB_t} \to 
Z:=
\begin{cases}
  0,& m_\xb>m_\xw,
\\
\xfrac{Y_\xw}{Y_\xb}, & m_\xb=m_\xw.
\end{cases}
\end{equation}
As a consequence,  as \ntoo, \as,
\begin{equation}\label{eleo}
   \frac{W_n}{B_n} =
   \frac{\xW_{\tau_n}}{\xB_{\tau_n}} \to Z.
\end{equation}
Furthermore, both $Y_\xb$ and $Y_\xw$ have support on the entire positive
half-axis. Consequently, $\PP(Y_\xw<Y_\xb)>0$ and thus
\eqref{win} implies
$\PP(Z<1)>0$ for any $m_\xb$ and $m_\xw$. 
Hence, \eqref{eleo} implies
\begin{equation}
  \PP\bigpar{\limsup_\ntoo W_n/B_n <1} \ge \PP(Z<1)>0.
\end{equation}
It follows that there exists an integer $N$ such that
\begin{equation}\label{ql}
\PP\bigpar{W_n/B_n<1 \text{ for all $n\ge N$}}>0.  
\end{equation}
There is only a finite number of possible outcomes of $(B_N,W_N)$, and
consequently \eqref{ql} implies that there are integers $(b_N,w_N)$ such
that
\begin{equation}\label{qm}
\PP\bigpar{(B_N,W_N)=(b_N,w_N) \text{ and } W_n/B_n<1 \text{ for all $n\ge N$}}
>0
\end{equation}
and consequently,
\begin{align}
\PP&\bigpar{(B_N,W_N)=(b_N,w_N)}>0, \label{qa}
\intertext{and}
\PP&\bigpar{ W_n/B_n<1 \text{ for all $n\ge N$}\mid (B_N,W_N)=(b_N,w_N)}>0.
\label{qb}
\end{align}
By \eqref{qa}, it is possible to reach $(b_N,w_N)$ by some sequence of $N$
draws, starting at $(b_0,w_0)$; thus $b_N=b_0+k_\xb m_\xb$
and $w_N=w_0+k_\xw m_\xw$, where $k_\xb+k_\xw=N$.

Consider now the event that
we draw a black ball in the first $k_\xb$
draws, and then a white ball in the following $k_\xw$ draws;
this evidently has  positive probability. 
Furthermore, then $(B_N,W_N)=(b_N,w_N)$. Moreover, in this case, 
for $0\le n\le N$,
$B_n-W_n$ first
increases and then decreases,  and since $B_0-W_0=b_0-w_0>0$
and $B_N-W_N=b_N-w_N>0$, we have $B_n-W_n>0$ for every $n\le N$.

Since the urn process is a Markov process, it is by \eqref{qb} possible to
continue, with positive probability, so that $B_n>W_n$ also for all $n\ge
N$.
Hence, with positive probability, $B_n>W_n$ for all $n\ge0$.
\end{proof}

\begin{remark}
  The theorem, and its proof, readily extends to any (finite) number of
  colours. Several versions are possible.
For example, if we assume that $m_1\ge m_j$ for every colour $j\neq1$, and a
  majority of the balls at time 0 are of colour 1, then there is a positive
  probability that a majority has colour 1 at every time $n$.
Alternatively,
still assuming that $m_1\ge m_j$ for every colour $j\neq1$, if a
plurality (relative majority) of the balls at time 0 are of colour 1, 
\ie, the number of balls of colour 1 is larger than the number of any other
given colour, then there is a positive
  probability that a plurality has colour 1 at every time $n$.
We leave the details, and further variations, to the reader.
\end{remark}

\newcommand\AAP{\emph{Adv. Appl. Probab.} }
\newcommand\JAP{\emph{J. Appl. Probab.} }
\newcommand\JAMS{\emph{J. \AMS} }
\newcommand\MAMS{\emph{Memoirs \AMS} }
\newcommand\PAMS{\emph{Proc. \AMS} }
\newcommand\TAMS{\emph{Trans. \AMS} }
\newcommand\AnnMS{\emph{Ann. Math. Statist.} }
\newcommand\AnnPr{\emph{Ann. Probab.} }
\newcommand\CPC{\emph{Combin. Probab. Comput.} }
\newcommand\JMAA{\emph{J. Math. Anal. Appl.} }
\newcommand\RSA{\emph{Random Struct. Alg.} }
\newcommand\ZW{\emph{Z. Wahrsch. Verw. Gebiete} }
\newcommand\DMTCS{\jour{Discr. Math. Theor. Comput. Sci.} }

\newcommand\AMS{Amer. Math. Soc.}
\newcommand\Springer{Springer-Verlag}
\newcommand\Wiley{Wiley}

\newcommand\vol{\textbf}
\newcommand\jour{\emph}
\newcommand\book{\emph}
\newcommand\inbook{\emph}
\def\no#1#2,{\unskip#2, no. #1,} 
\newcommand\toappear{\unskip, to appear}

\newcommand\arxiv[1]{\texttt{arXiv:#1}}
\newcommand\arXiv{\arxiv}

\def\nobibitem#1\par{}

\end{document}